\newdimen\myMargin
\newcommand{\A}{{\cal A}}
\newcommand{\K}{\mathbb K}
\newcommand{\R}{\mathbb R}
\newcommand{\Q}{\mathbb Q}
\newcommand{\C}{\mathbb C}
\newcommand{\OK}{\mathcal O}
\newcommand{\Aalg}{\mathcal A}
\newcommand{\Z}{\mathbb Z}
\newcommand{\rank}{\mathrm{rank}}
\newtheorem{theorem}{Theorem}
\newtheorem{lemma}[theorem]{Lemma}
\newtheorem{corollary}[theorem]{Corollary}
\newtheorem{proposition}[theorem]{Proposition}
\theoremstyle{remark}
\theoremstyle{remark}
\title{Improved algorithms for splitting full matrix algebras}
\author{\normalsize
  \begin{minipage}{0.3\linewidth}
    \large
    G\'abor Ivanyos \\
    \footnotesize
    Computer and Automation Research 
Institute, Hungarian Acad. Sci. \\
    \texttt{Gabor.Ivanyos@sztaki.hu} \\
    \normalsize
  \end{minipage}
  \qquad
  \begin{minipage}{0.3\linewidth}
    \large
    \'Ad\'am D. Lelkes \\
    \footnotesize
    Dept. of Algebra, Budapest 
    Univ. of Technology and Economics \\
    \texttt{lelkesa@math.bme.hu}
    \normalsize
  \end{minipage}
  \qquad
  \begin{minipage}{0.3\linewidth}
    \large
    Lajos R\'onyai \\
    \footnotesize
    Computer and Automation Research
     Institute, Hungarian Acad. Sci. \\
    Dept. of Algebra, Budapest 
    Univ. of Technology and Economics \\
    \texttt{lajos@ilab.sztaki.hu}
    \normalsize
  \end{minipage}
  \vspace{0.5cm}
}
\begin{document}

\thispagestyle{empty}

\maketitle

\footnotetext{
\noindent
{\em 2010 Mathematics Subject Classification:} 16Z05, 11Y16, 68W30.\\
{\em Key words and phrases:} Central simple algebra, maximal order, real and
complex embedding, lattice basis reduction, tensor product of 
lattices, Hermite constant, Berg\'e-Martinet constant.\\
Our work was supported in part by OTKA grants 
NK 105645, K77476, and K77778. 
}


\begin{abstract}
Let $\K$ be an algebraic number field of degree $d$ and discriminant
$\Delta$
over $\Q$. Let 
$\A$ be an associative algebra over $\K$ given by structure constants such
that $\A\cong M_n(\K)$ holds for some positive integer $n$. Suppose that
$d$, $n$ and $|\Delta|$ are bounded. In a previous paper a polynomial time 
ff-algorithm was given to construct explicitly an isomorphism
$\A \rightarrow M_n(\K)$. 

Here we simplify and improve this algorithm in the cases $n\leq 43$,
$\K=\Q$, and $n=2$, with $\K=\Q(\sqrt{-1})$ or 
$\K=\Q(\sqrt{-3})$. The improvements are based on work by Y. Kitaoka and
R. Coulangeon on tensor products of lattices. 
\end{abstract}

\section{Introduction}

The following {\em explicit isomorphism problem} is important in
computational representation theory: 
let $\K$ be an algebraic number field, $\A$ an associative algebra
over $\K$. Suppose that
$\A$ is isomorphic to the full matrix algebra $M_n(\K)$.
Construct explicitly an isomorphism $\A\rightarrow M_n(\K)$. Or,
equivalently, give an irreducible $\A$ module.

The algebra $\A$ is considered to be given by a collection of {\em structure 
constants} $\gamma_{ijk}\in
\K$. They form the multiplication table of $\A$ with respect to some $\K$
basis $a_1, \ldots ,a_m$:
the products $a_ia_j$ can be expressed as
$$ a_ia_j=\gamma _{ij1}a_1+\gamma _{ij2}a_2+\cdots +\gamma _{ijm}a_m. $$

In \cite{Ronyai} a polynomial time ff-algorithm was given for the case of
the problem, when $n$ and the degree and the discriminant of $\K$ are all
bounded. Applications were also outlined there, including some
parametrization problems of algebraic geometry.  The methods of
\cite{Ronyai} are based on theoretical and algorithmic results on lattices
and eventually boil down to enumerating short vectors in  
some lattices in real Euclidean spaces. 

In this paper we present considerable improvements of the methods of
\cite{Ronyai} in the case when the ground field $\K$ is the rationals, and 
$n\leq 43$. This is based on results of Kitaoka \cite{Kitaoka},
\cite{Kitaoka2} on tensor products of lattices, in particular we make use of
the powerful result of 
Corollary \ref{Kitaoka_cor}. In Theorem \ref{kitconstant} we prove a
quantitative version, which allows further reduction in computing time.   
Some of Kitaoka's results
have been extended by Coulangeon \cite{Coulangeon} from $\Q$ to imaginary 
quadratic
fields. Using these we also obtain an improvement of the original algorithm
when $\K=\Q(\sqrt{-1})$ (Gaussian rationals) or $\K=\Q(\sqrt{-3})$
(Eisenstein numbers) for the case $n=2$.  The new algorithms are simpler
and faster than the original ones. 

For the basic definitions and facts on lattices in real Euclidean spaces
we refer to \cite{Conway}, \cite{Martinet}, and  \cite{Milnor}.

\section{Full matrix algebras over $\Q$ }

A {\em (full) lattice} $L\subset \R^n$ is the free Abelian group 
generated by  $n$ linearly independent vectors 
$\mathbf{b}_1,\mathbf{b}_2,\ldots, \mathbf{b}_n$.
Then $M=(\mathbf b_1 | \mathbf b_2 | \ldots | \mathbf b_n)$ is a 
matrix of $L$, and $|\det M|$ is called the {\em determinant} of 
$L$, and is denoted by $\det L$. We denote by $\lambda_1(L)$ the
Euclidean length of the shortest nonzero vector from $L$.
The $n$th \emph{Hermite's constant} is 
$$\gamma_n := \sup_L \left(\frac{\lambda_1(L)}{(\det L)^{1/n}}\right)^2,$$
where $L$ is a full lattice  in $\R^n$.
Hermite proved that $\gamma_n$ actually exists.
The exact value of $\gamma_n$ is known only for 
$n\in\{1,2,\ldots,8,24\}$. 

We briefly recall now the definition of the tensor product of lattices, for 
more information see section 1.10 of Martinet \cite{Martinet}, 
and section 7 in Kitaoka \cite{Kitaoka2}.
Let $L$ and $M$ be two lattices in $\mathbb R^m$ and $\mathbb R^n$, 
respectively. The tensor product of $\Z$ modules 
$L\otimes_{\mathbb Z} M$ embeds in the straightforward way 
into $\mathbb R^m\otimes_{\mathbb R} \mathbb R^n$.
This allows one to define $L\otimes M$ as
the set of integral linear combinations of the
tensors $\mathbf x\otimes \mathbf y$ from $\mathbb R^m\otimes_{\mathbb R}
\mathbb R^n$ where $\mathbf x\in L$ and
$\mathbf y\in M$.

Note that, in terms of coordinates, $L\otimes M$ can be viewed as the set
(actually lattice)
of $m$ by $n$ matrices over $\mathbb R$ which are integral linear
combinations
of dyads of the form $\mathbf x\mathbf y^T$, where $\mathbf x\in L$ and
$\mathbf y\in M$.
Note also that $\mathbb R^m\otimes \mathbb R^n$ is an Euclidean space with
the law
$\langle \mathbf x_1\otimes \mathbf y_1, \mathbf x_2\otimes \mathbf
y_2\rangle=
\langle \mathbf x_1,\mathbf x_2\rangle \langle \mathbf y_1,\mathbf
y_2\rangle$.
In this setting the norm on the tensor product 
$\mathbb R^m\otimes \mathbb R^n$
is actually the Frobenius norm on the space of matrices $M_{m,n}(\mathbb R)$. 

Let $L$ be a full lattice in $\R^m$. The {\em dual} $L^*$ of $L$ consists 
of those vectors $\mathbf y\in  \R^m$ for which we have 
$\langle \mathbf x, \mathbf y\rangle \in \Z$ holds for every $\mathbf x\in
L$. The supremum $\gamma'_n$ of $\lambda_1(L)\lambda_1(L^*)$ among  
full lattices $L\subset
\R^n$ is the {\em Berg\'e-Martinet constant}, see \cite{BM},
\cite{Martinet}. It is known that $\gamma'_n\leq \gamma_n$ for every $n$. 

Following Y. Kitaoka \cite{Kitaoka} we 
say that a lattice $L$ is of \emph{$E$-type} if every minimal nonzero vector of
$L\otimes M$ is of the form $\mathbf x\otimes \mathbf y$
($\mathbf x\in L$, $\mathbf y\in M$) for any lattice $M$.

Kitaoka proved in  \cite{Kitaoka}, and in Theorem 7.1.1. of \cite{Kitaoka2} 
the following.

\begin{theorem}[Kitaoka]\label{kitaoka1}
If $L$ is a lattice of rank at most $43$, then $L$ is of $E$-type.
\end{theorem}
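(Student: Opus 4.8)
The plan is to prove that any lattice $L$ of rank $n \leq 43$ is of $E$-type by an induction on $n$, controlling the geometry of minimal vectors in $L \otimes M$ via the Hermite and Berg\'e--Martinet constants. First I would fix an arbitrary lattice $M$ and a minimal nonzero vector $z \in L \otimes M$, viewed (after choosing bases) as an $m \times n$ real matrix $Z = \sum_i \mathbf x_i \mathbf y_i^T$ whose Frobenius norm equals $\lambda_1(L \otimes M)$. The key structural fact is that $L \otimes M$ contains $L \otimes \mathbf y$ for every $\mathbf y \in M$, so $\lambda_1(L\otimes M)^2 \le \lambda_1(L)^2 \lambda_1(M)^2$; the goal is to show equality forces $Z$ to have rank $1$, i.e. $Z = \mathbf x \otimes \mathbf y$.

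The central step is a rank reduction via the singular value decomposition of $Z$. Write $Z = \sum_{j=1}^r \sigma_j \mathbf u_j \mathbf v_j^T$ with $\sigma_1 \ge \cdots \ge \sigma_r > 0$ and orthonormal systems $\{\mathbf u_j\}$, $\{\mathbf v_j\}$; then $\|Z\|_F^2 = \sum_j \sigma_j^2$. One projects: for a subset $S \subseteq \{1,\dots,r\}$, the matrix $Z_S := \sum_{j \in S} \sigma_j \mathbf u_j \mathbf v_j^T$ is the image of $Z$ under an orthogonal projection of $\R^m \otimes \R^n$, hence $\|Z_S\|_F \le \|Z\|_F$ — but $Z_S$ need not lie in $L \otimes M$. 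The standard trick (this is the heart of Kitaoka's argument) is instead to estimate $\lambda_1(L \otimes M)$ from below by pairing $Z$ against suitable vectors in the dual lattice $(L \otimes M)^* = L^* \otimes M^*$: for $\mathbf a \in L^*$, $\mathbf b \in M^*$ one has $\langle Z, \mathbf a \otimes \mathbf b\rangle = \mathbf a^T Z \mathbf b \in \Z$, and choosing $\mathbf a$, $\mathbf b$ to be short dual vectors aligned with the dominant singular directions $\mathbf u_1$, $\mathbf v_1$ produces a nonzero integer bounded by $\lambda_1(L^*)\lambda_1(M^*)\|Z\|_F$ times geometric factors, which combined with $\sigma_1 \ge \|Z\|_F/\sqrt r$ forces $r$ to be small when $\gamma'_n$ is small enough. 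Iterating — or more precisely running the argument on the lattices $L$ and $M$ restricted to the singular subspaces — reduces the rank until one is in a regime where $r = 1$ can be concluded outright.

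To make the rank bound explicit one needs the inequality $\gamma'_k \le \gamma_k$ together with the known (or known-good upper) values of $\gamma_k$ for $k \le 43$; the numerical fact that makes $43$ the threshold is that $\gamma_k < \dots$ (the precise Hermite-constant bound from Blichfeldt/Cohn--Elkies type estimates) stays below the critical value $4/3$ up through rank $43$, which is exactly the inequality that closes the SVD dichotomy. I would carry this out by: (i) setting up the SVD and the dual pairing; (ii) deriving the fundamental inequality relating $\sigma_1^2$, $\|Z\|_F^2$, $\lambda_1(L^*)^2$ and $\lambda_1$ of the relevant sublattices; (iii) invoking the Berg\'e--Martinet bound to show the "error'' rank-$(r-1)$ part must vanish; (iv) handling the base case $r = 1$ (trivial) and the inductive descent on $n$.

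The main obstacle I anticipate is step (iii): the dual vectors $\mathbf a \in L^*$ of length $\le \lambda_1(L^*)$ need not be well-aligned with $\mathbf u_1$, so the integrality argument $\mathbf a^T Z \mathbf b \in \Z \setminus \{0\}$ requires choosing the right lattice vectors and tracking the loss from misalignment — this is where the sharp constant $4/3$ enters and where the bound on $n$ comes from. A secondary subtlety is that the sublattice of $L$ spanned by the relevant singular directions of $Z$ is not a coordinate sublattice, so "restricting'' $L$ and $M$ to these subspaces must be done via orthogonal projection, and one must verify that the projected objects are still lattices of full rank in the subspace with the expected duality behaviour, so that the induction hypothesis applies.
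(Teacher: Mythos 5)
The paper does not actually prove Theorem~\ref{kitaoka1}: it is quoted from Kitaoka (and Theorem 7.1.1 of \cite{Kitaoka2}). But the paper's own Theorem~\ref{kitconstant}, whose proof is described as ``a modified variant of Kitaoka's original argument,'' shows exactly how the argument runs, and it is quite different from what you propose — and your route has a gap that I do not see how to close.

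Kitaoka's proof does not decompose a minimal vector $\mathbf v\in L\otimes M$ by its singular value decomposition, and it makes no use of the dual lattice or the Berg\'e--Martinet constant. Instead, one takes the \emph{lattice} tensor-rank decomposition $\mathbf v=\sum_{i=1}^r\mathbf x_i\otimes\mathbf y_i$ with $\mathbf x_i\in L$, $\mathbf y_i\in M$ and $r$ minimal, forms the two $r\times r$ Gram matrices $A=[\langle\mathbf x_i,\mathbf x_j\rangle]$ and $B=[\langle\mathbf y_i,\mathbf y_j\rangle]$, observes the identity $\lVert\mathbf v\rVert^2=\mathrm{Tr}(AB)$, and applies the trace inequality $\mathrm{Tr}(AB)\ge r(\det A\det B)^{1/r}$ for positive definite $A,B$ (the paper's Lemma~\ref{kitlemma}, Kitaoka's Lemma 7.1.3). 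On the other side one has $\lambda_1(L\otimes M)\le\lambda_1(L)\lambda_1(M)\le\lambda_1(L_1)\lambda_1(M_1)$ where $L_1,M_1$ are the rank-$r$ sublattices spanned by the $\mathbf x_i,\mathbf y_i$, and Hermite's inequality gives $\lambda_1(L_1)^2\lambda_1(M_1)^2\le\gamma_r^2(\det A\det B)^{1/r}$. Combining yields $r\le\gamma_r^2$ whenever $\mathbf v$ is minimal, and the numerical fact closing the argument is simply that $\gamma_r^2<r$ for all $2\le r\le 43$ (Kitaoka's Lemma 7.1.2). There is no induction on the rank of $L$, no projection, and no pairing against dual vectors; the threshold $43$ comes from known upper bounds on $\gamma_r$ versus $\sqrt r$, not from comparison with $4/3$ (that number is just $\gamma_2^2$, the $r=2$ case).

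Your proposal has two genuine obstructions. First, the singular vectors $\mathbf u_j,\mathbf v_j$ of $Z$ are orthonormal but are generically \emph{not} lattice vectors, so the truncations $Z_S$ and the ``restriction of $L$ and $M$ to singular subspaces'' do not stay in the category of lattices — there is no sublattice of $L$ that lives in the top singular subspace in general, and no way to run an induction there. Second, the dual-pairing bound $\mathbf a^TZ\mathbf b\in\Z$ is true but inconclusive: a short $\mathbf a\in L^*$ need not be aligned with $\mathbf u_1$ at all, $\mathbf a^TZ\mathbf b$ may well vanish, and no quantitative control on the misalignment is available — you flag this yourself as ``the main obstacle,'' and it is exactly where the argument breaks. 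The missing ingredient is the passage from the Frobenius-norm computation to the Gram matrices of the lattice-rank decomposition and the $\mathrm{Tr}(AB)\ge r(\det A\det B)^{1/r}$ lemma; once that is in place the whole dual-lattice machinery is unnecessary.
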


We remark here that by a theorem of Steinberg (see  \cite{Milnor}, Chapter
II, \S 9) for any integer $n\geq 292$ 
there exists a lattice $L$ such that 
$\lambda_1(L\otimes L^*)<\lambda_1(L)\lambda_1(L^*)$.
Thus, the conclusion of Kitaoka's theorem does not hold for larger values of
$n$. 

We can  apply Kitaoka's theorem  to maximal orders of the form 
$\Lambda =QM_n(\Z) Q^{-1}\subset
M_n(\R)$, where $Q\in GL_n(\R)$. 

\begin{corollary} \label{Kitaoka_cor}
For any dimension $n\le 43$ and subring $\Lambda\subset M_n(\R )$ of the
above form the nonzero matrices with the smallest Frobenius norm in 
$\Lambda$ are of rank one.
\end{corollary}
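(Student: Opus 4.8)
The plan is to recognize $\Lambda$ as a tensor product of two full lattices in $\R^n$ and then quote Kitaoka's theorem. First I would note that $M_n(\Z)$, viewed as a lattice in the Euclidean space $M_n(\R)$ equipped with the Frobenius norm, is precisely $\Z^n\otimes\Z^n$: its elements are the integral linear combinations of the dyads $e_ie_j^T$, and under the coordinate description of the tensor product recalled above these dyads are exactly the pure tensors $e_i\otimes e_j$.

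Next I would compute the effect of conjugation by $Q$ on a dyad: for $x,y\in\R^n$ one has $Q(xy^T)Q^{-1}=(Qx)(Q^{-T}y)^T$, so conjugation by $Q$ carries $x\otimes y$ to $(Qx)\otimes(Q^{-T}y)$ and is compatible with the identification $\R^n\otimes\R^n\cong M_n(\R)$. Hence, putting $L:=Q\Z^n$ and $M:=Q^{-T}\Z^n$ (both full lattices in $\R^n$, since $Q\in GL_n(\R)$), the $\Z$-module $\Lambda=QM_n(\Z)Q^{-1}$ is, as a lattice in $(M_n(\R),\|\cdot\|_{\mathrm{Frob}})$, exactly $L\otimes M$.

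Since $n\le 43$, the lattice $L$ has rank at most $43$, so Theorem \ref{kitaoka1} says $L$ is of $E$-type. By the definition of $E$-type, every minimal nonzero vector of $L\otimes M$ is of the form $x\otimes y$ with $x\in L$, $y\in M$; translating back through the coordinate dictionary, a nonzero matrix of smallest Frobenius norm in $\Lambda$ equals $xy^T$ for some $x,y\in\R^n$, and being nonzero it has rank exactly one. This gives the corollary.

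The proof is short once the right picture is in place; the one point needing care — and the key observation — is that conjugation by $Q$ respects the tensor decomposition, so that $\Lambda$ is again a tensor product of two full lattices (with the second factor coming from $Q^{-T}$ rather than $Q$) and Kitaoka's $E$-type theorem applies verbatim. After that, the Frobenius-norm/tensor-norm correspondence and the definition of $E$-type finish the argument with essentially no computation.
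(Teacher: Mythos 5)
Your proof is correct and takes essentially the same route as the paper: identify $\Lambda$ with a tensor product of two full rank-$n$ lattices and invoke Kitaoka's $E$-type theorem. The only cosmetic difference is that you derive the identification $\Lambda\cong Q\Z^n\otimes Q^{-T}\Z^n$ by an explicit conjugation-of-dyads computation, whereas the paper cites the proof of Theorem 1 of \cite{Ronyai} and writes the second factor as the dual lattice $(Q\Z^n)^*$, which equals your $Q^{-T}\Z^n$.
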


\begin{proof}
The order $\Lambda$ is  given by the transformation matrix $Q$. As in the
proof of Theorem 1 of \cite{Ronyai}, we see that as a lattice
$\Lambda\cong Q\Z^n \otimes (Q\Z^n )^*$.  But $Q\Z^n$ is a rank $n$ lattice,
hence it is of $E$-type, giving that the matrices of minimal norm
in $\Lambda$
are dyadic products of vectors from $Q\Z^n$ and $(Q\Z^n)^*$, and hence 
have rank 1 as matrices from $M_n(\R)$. 
\end{proof}

Thus, when $n\leq 43$, then  the smallest zero divisors
in any $\Lambda$ have rank one. In particular, no matrix 
from $\Lambda$ of rank at least 2 can have minimal length. 
By using a modified variant of Kitaoka's original argument,
we prove a slightly stronger, quantitative version of the latter statement.

Recall that the {\em rank} of a nonzero tensor $\mathbf v\in L\otimes 
M$ is the smallest positive integer $r$ such that $\mathbf v$ can be 
written as 
\begin{equation} \label{repr}
\mathbf v=\sum_{i=1}^r \mathbf x_i\otimes \mathbf y_i
\end{equation}
for some 
$\mathbf x_1,\ldots, \mathbf x_r\in L$ and  $\mathbf y_1,\ldots, 
\mathbf y_r\in M$. 

\begin{theorem}\label{kitconstant}
Let $L$ and $M$ be lattices. Then for every tensor $\mathbf v\in L\otimes M$ 
of rank $r$ we have 
$$\lVert \mathbf v \rVert \geq  \sqrt{\frac{r}{\gamma_r^2}} 
\lambda_1(L\otimes M).$$
\end{theorem}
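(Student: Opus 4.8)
The plan is to bound $\lVert \mathbf v \rVert$ from below by relating a rank-$r$ tensor to a rank-$r$ sublattice on which the tensor-product norm restricts to a scaled Frobenius-type norm, and then invoke Hermite's constant for dimension $r$. First I would take an optimal representation $\mathbf v=\sum_{i=1}^r \mathbf x_i\otimes \mathbf y_i$ as in \eqref{repr}, and let $L_0=\langle \mathbf x_1,\dots,\mathbf x_r\rangle_\Z \subseteq L$ and $M_0=\langle \mathbf y_1,\dots,\mathbf y_r\rangle_\Z \subseteq M$ be the sublattices they generate. By minimality of the rank, the $\mathbf x_i$ are $\R$-linearly independent and so are the $\mathbf y_i$ (otherwise one could rewrite $\mathbf v$ with fewer terms), so $L_0$ and $M_0$ are full lattices of rank $r$ in the $r$-dimensional spaces they span, and $\mathbf v\in L_0\otimes M_0$, which is a full sublattice of the $r^2$-dimensional space $\mathrm{span}(L_0)\otimes\mathrm{span}(M_0)$. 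Since $L_0\otimes M_0\subseteq L\otimes M$, every nonzero vector of $L_0\otimes M_0$ has norm at least $\lambda_1(L\otimes M)$, i.e. $\lambda_1(L_0\otimes M_0)\geq \lambda_1(L\otimes M)$.

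Next I would get an upper bound on $\lambda_1(L_0\otimes M_0)$ via Hermite's constant in dimension $r^2$ applied to the lattice $L_0\otimes M_0$, using the standard fact (see Martinet \cite{Martinet}, §1.10, or Kitaoka \cite{Kitaoka2}, §7) that $\det(L_0\otimes M_0)=(\det L_0)^{r}(\det M_0)^{r}$. This gives $\lambda_1(L_0\otimes M_0)^2 \leq \gamma_{r^2}\,(\det L_0\cdot\det M_0)^{2/r}$. The key then is to bound $(\det L_0\cdot \det M_0)^{1/r}$ from above in terms of $\lVert \mathbf v\rVert$. Writing the Gram matrices $A=(\langle \mathbf x_i,\mathbf x_j\rangle)$ and $B=(\langle \mathbf y_i,\mathbf y_j\rangle)$, one has $\lVert \mathbf v\rVert^2 = \sum_{i,j}\langle\mathbf x_i,\mathbf x_j\rangle\langle\mathbf y_i,\mathbf y_j\rangle = \Tr(A B^T) = \Tr(AB)$ since $A,B$ are symmetric. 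Now $A,B$ are positive definite of size $r$, and I would use the inequality $\Tr(AB)\geq r\,(\det A)^{1/r}(\det B)^{1/r}$, which follows from the AM–GM inequality applied to the eigenvalues of $A^{1/2}BA^{1/2}$ (a positive definite matrix with determinant $\det A\det B$ and trace $\Tr(AB)$). Since $\det A=(\det L_0)^2$ and $\det B=(\det M_0)^2$, this reads $\lVert\mathbf v\rVert^2 \geq r\,(\det L_0\cdot\det M_0)^{2/r}$, i.e. $(\det L_0\cdot\det M_0)^{2/r}\leq \lVert\mathbf v\rVert^2/r$.

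Combining the two bounds gives $\lambda_1(L\otimes M)^2 \leq \lambda_1(L_0\otimes M_0)^2 \leq \gamma_{r^2}\,(\det L_0\cdot\det M_0)^{2/r}\leq \gamma_{r^2}\,\lVert\mathbf v\rVert^2/r$, so $\lVert\mathbf v\rVert \geq \sqrt{r/\gamma_{r^2}}\,\lambda_1(L\otimes M)$. The remaining gap is that the statement asks for $\gamma_r^2$ in place of $\gamma_{r^2}$, so the final step would be to replace the Hermite-constant bound for $L_0\otimes M_0$ by the sharper fact that $\lambda_1(L_0\otimes M_0)^2\leq \gamma_r^2\,(\det(L_0\otimes M_0))^{2/r^2} = \gamma_r^2\,(\det L_0\cdot\det M_0)^{2/r}$; this is precisely the submultiplicativity of the Hermite invariant under tensor product, $\gamma(L\otimes M)\leq \gamma(L)\gamma(M)$ together with $\gamma(L_0),\gamma(M_0)\leq \gamma_r$ (here $\gamma(N) = \lambda_1(N)^2/(\det N)^{2/\dim N}$ denotes the Hermite invariant of a lattice $N$). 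I expect this last point — getting $\gamma_r^2$ rather than merely $\gamma_{r^2}$ — to be the main obstacle, since it requires the tensor-product bound for Hermite invariants rather than the naive dimension count; this is exactly the ``modified variant of Kitaoka's original argument'' alluded to before the theorem, and it is where Kitaoka's tensor-product lemma does the real work.
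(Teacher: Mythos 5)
Your proposal is correct and uses essentially the same engine as the paper's proof: the trace inequality $\mathrm{Tr}(AB)\geq r\,(\det A)^{1/r}(\det B)^{1/r}$ for the two Gram matrices (this is Lemma~\ref{kitlemma}), together with a Hermite-constant bound on the rank-$r$ sublattices spanned by a minimal representation of $\mathbf v$. The only cosmetic difference is in the second half: the paper runs the chain $\lambda_1(L\otimes M)\leq\lambda_1(L)\lambda_1(M)\leq\lambda_1(L_1)\lambda_1(M_1)$ and applies the Hermite constant to $L_1$ and $M_1$ separately (formulated as a proof by contradiction), whereas you run $\lambda_1(L\otimes M)\leq\lambda_1(L_0\otimes M_0)$ and then invoke the submultiplicativity $\gamma(L_0\otimes M_0)\leq\gamma(L_0)\gamma(M_0)\leq\gamma_r^2$; unwinding the submultiplicativity gives exactly the paper's chain, so the two routes are equivalent.

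One point to set straight: you describe obtaining $\gamma_r^2$ rather than $\gamma_{r^2}$ as ``the main obstacle'' and attribute it to a deep tensor-product lemma of Kitaoka. That is misplaced. The bound $\gamma(L\otimes M)\leq\gamma(L)\gamma(M)$ is elementary: it follows from $\lambda_1(L\otimes M)\leq\lambda_1(L)\lambda_1(M)$ (tensor the two shortest vectors) together with $\det(L\otimes M)=(\det L)^{\mathrm{rk}\,M}(\det M)^{\mathrm{rk}\,L}$; no $E$-type theory is needed. The genuinely Kitaoka-specific input in this theorem is the trace inequality itself, which you already state and correctly prove by AM--GM on the eigenvalues of $A^{1/2}BA^{1/2}$. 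So your sketch, once the final step is written out as you indicate, is in fact a complete and self-contained proof, and you do not need anything beyond what you have already written.
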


We remark that the above bound is meaningful when the dimension of $L$
or $M$ is at most $43$. Then obviously $r\leq 43$, and by 
Lemma 7.1.2 from \cite{Kitaoka2} for $2\leq r\leq 43$ we have  
$1< r/\gamma_r^2$. For $r$ large the bound becomes trivial, because 
$r/\gamma_r^2$ tends to zero as $r$ grows.  We shall need Lemma 7.1.3 from 
\cite{Kitaoka2}: 

\begin{lemma} \label{kitlemma}
Let $A,B\in M_n(\R)$ be positive definite real symmetric matrices.
Then we have $\mathrm{Tr}(AB)\ge n\root{n}\of{\det A}\root{n}\of{\det B}$.
\end{lemma}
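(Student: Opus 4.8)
The plan is to reduce to a full‑rank situation and then combine an arithmetic--geometric mean estimate (supplied by Lemma~\ref{kitlemma}) with Hermite's inequality applied \emph{twice, to two rank-$r$ lattices}, rather than once to the rank-$r^2$ lattice $L\otimes M$; this last point is exactly what produces the factor $\gamma_r^2$ in the denominator instead of the much weaker $\gamma_{r^2}$.

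First I would fix a representation $\mathbf v=\sum_{i=1}^{r}\mathbf x_i\otimes\mathbf y_i$ with $\mathbf x_i\in L$, $\mathbf y_i\in M$, which exists because $\mathbf v$ has rank $r$. Put $L_1=\Z\mathbf x_1+\dots+\Z\mathbf x_r\subseteq L$ and $M_1=\Z\mathbf y_1+\dots+\Z\mathbf y_r\subseteq M$. Since $\mathbf v$ has rank $r$, the $\R$-span of $\mathbf x_1,\dots,\mathbf x_r$ must be $r$-dimensional (otherwise $\mathbf v$ would lie in $L_1\otimes M$ with $L_1$ of rank $<r$, forcing $\mathbf v$ to have rank $<r$), and likewise for the $\mathbf y_i$; hence $L_1$ and $M_1$ are lattices of rank exactly $r$, and $\mathbf v\in L_1\otimes M_1\subseteq L\otimes M$. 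As passing to a sublattice cannot decrease $\lambda_1$, we get $\lambda_1(L\otimes M)\le\lambda_1(L_1\otimes M_1)$, so it suffices to prove the bound with $L_1\otimes M_1$ in place of $L\otimes M$. Choosing orthonormal bases of the $\R$-span of the $\mathbf x_i$ and of the $\R$-span of the $\mathbf y_i$, I may thus assume outright that $L_1,M_1$ are full lattices in $\R^r$ with square basis matrices $B_L,B_M\in GL_r(\R)$, that the tensors $\mathbf b\otimes\mathbf c$ over these bases form a basis of $L_1\otimes M_1$, and that $\mathbf v$ is represented by a matrix $V\in M_r(\R)$ of (matrix) rank $r$, with $\lVert\mathbf v\rVert$ equal to the Frobenius norm of $V$.

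Now I would write $V=B_L\,T\,B_M^{T}$ for an integer matrix $T$ (the coordinate vector of $\mathbf v$ in the basis just fixed); since $V$ is invertible, so is $T$, whence $|\det T|\ge1$ and $|\det V|\ge|\det B_L|\,|\det B_M|=\det L_1\cdot\det M_1$. Applying Lemma~\ref{kitlemma} with $A=VV^{T}$ (positive definite) and $B=I_r$ gives $\lVert\mathbf v\rVert^{2}=\mathrm{Tr}(VV^{T})\ge r\,(\det(VV^{T}))^{1/r}=r\,|\det V|^{2/r}$. Next, the definition of $\gamma_r$ applied to the rank-$r$ lattices $L_1$ and $M_1$ yields $\lambda_1(L_1)^2\le\gamma_r(\det L_1)^{2/r}$ and $\lambda_1(M_1)^2\le\gamma_r(\det M_1)^{2/r}$, so $(\det L_1\cdot\det M_1)^{1/r}\ge\lambda_1(L_1)\lambda_1(M_1)/\gamma_r$; and since $\mathbf x^{*}\otimes\mathbf y^{*}$ (for minimal $\mathbf x^{*}\in L_1$, $\mathbf y^{*}\in M_1$) is a nonzero element of $L_1\otimes M_1$, we have $\lambda_1(L_1)\lambda_1(M_1)\ge\lambda_1(L_1\otimes M_1)\ge\lambda_1(L\otimes M)$. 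Chaining all of this, $\lVert\mathbf v\rVert^{2}\ge r\,|\det V|^{2/r}\ge r\,(\det L_1\cdot\det M_1)^{2/r}\ge r\,(\lambda_1(L\otimes M)/\gamma_r)^{2}$, which is precisely the claimed inequality.

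The one genuinely delicate point is the reduction step: one must pass to the sublattices $L_1,M_1$ \emph{generated by the components of a minimal representation} (so that their rank is exactly $r$, which is what lets $\gamma_r$ appear rather than some larger-index Hermite constant) and check that $\lambda_1$ moves in the right direction under this passage. After that the argument is a short chain whose links are, in order, Lemma~\ref{kitlemma}, the integrality bound $|\det T|\ge1$, Hermite's inequality for $L_1$ and $M_1$ separately, and the trivial estimate $\lambda_1(L_1\otimes M_1)\le\lambda_1(L_1)\lambda_1(M_1)$; none of these should pose difficulty, and in particular no appeal to Kitaoka's $E$-type theorem (Theorem~\ref{kitaoka1}) or to Corollary~\ref{Kitaoka_cor} is needed here.
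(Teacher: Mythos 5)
The statement you were asked to prove is Lemma~\ref{kitlemma}: the trace inequality $\mathrm{Tr}(AB)\ge n\sqrt[n]{\det A}\,\sqrt[n]{\det B}$ for positive definite symmetric $A,B\in M_n(\R)$. Your argument does not address this statement at all. What you have written is a proof of Theorem~\ref{kitconstant} (the lower bound $\lVert\mathbf v\rVert\ge\sqrt{r/\gamma_r^2}\,\lambda_1(L\otimes M)$ for rank-$r$ tensors), and in the course of it you explicitly \emph{invoke} Lemma~\ref{kitlemma} (``Applying Lemma~\ref{kitlemma} with $A=VV^{T}$ and $B=I_r$\dots''). As a proof of the lemma this is vacuous and circular: the inequality to be established is used as an ingredient and is never itself derived. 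So the gap is total --- the lemma is simply not proved. (For context, the paper also gives no proof; it quotes the lemma as Lemma 7.1.3 of \cite{Kitaoka2}.)

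If you want to supply an actual proof, it is short. Since $A$ is positive definite, write $A=C^{T}C$ with $C$ invertible. Then $\mathrm{Tr}(AB)=\mathrm{Tr}(C^{T}CB)=\mathrm{Tr}(CBC^{T})$, and $CBC^{T}$ is symmetric positive definite, so its eigenvalues $\mu_1,\dots,\mu_n$ are positive; by the arithmetic--geometric mean inequality,
$$\mathrm{Tr}(CBC^{T})=\sum_{i=1}^{n}\mu_i\ \ge\ n\Bigl(\prod_{i=1}^{n}\mu_i\Bigr)^{1/n}=n\bigl(\det(CBC^{T})\bigr)^{1/n}=n\bigl(\det A\cdot\det B\bigr)^{1/n}.$$
As a side remark, your argument for Theorem~\ref{kitconstant} is essentially sound (the reduction to the rank-$r$ sublattices $L_1,M_1$, the determinant bound via an integral change of basis, and the two applications of Hermite's inequality all check out, and it is a mild variant of the paper's Gram-matrix argument) --- but it answers a different question from the one posed.
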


\medskip

\noindent
{\em Proof of Theorem \ref{kitconstant}.}
Let $\mathbf v\in L\otimes M$ be a tensor of rank $r$. Then $\mathbf v$ 
can be written in the form (\ref{repr}).
Let $L_1$ be the lattice generated by 
$\{\mathbf x_1,\ldots,\mathbf x_r\}$, and similarly let $M_1$ be the lattice
spanned by $ \{\mathbf y_1,\ldots,\mathbf
y_r\}$. By the minimality of representation (\ref{repr}) the rank of these 
sublattices is $r$. Noting that 
$$\lVert \mathbf v\rVert^2=\left\lVert \sum_{i=1}^r \mathbf x_i\otimes
\mathbf y_i\right\rVert^2=
\sum_{i,j=1}^r \langle \mathbf x_i, \mathbf x_j\rangle \langle \mathbf y_i,
\mathbf y_j\rangle=
\mathrm{Tr}([\langle \mathbf x_i, \mathbf x_j\rangle]_{i,j=1}^r \cdot
[\langle \mathbf y_i,    
\mathbf y_j\rangle]_{i,j=1}^r),$$
and by using Lemma \ref{kitlemma} we obtain
\begin{equation} \label{lower}
\lVert \mathbf v\rVert^2 \ge r\left(\det [\langle \mathbf x_i, \mathbf
x_j\rangle]\cdot
\det [\langle \mathbf y_i, \mathbf y_j\rangle] \right)^{1/r}. 
\end{equation}
Now let us assume for contradiction 
that $\lVert \mathbf v\rVert^2<(r/\gamma_r^2) \lambda_1(L\otimes M)^2$.
It follows that 
$$\lVert \mathbf v\rVert^2<\frac{r}{\gamma_r^2} (\lambda_1(L)
\lambda_1(M))^2   
\le \frac{r}{\gamma_r^2} (\lambda_1(L_1)
\lambda_1(M_1))^2.$$
Combining this with (\ref{lower}) we obtain
$$r< \frac{r}{\gamma_r^2} \cdot \frac{\lambda_1(L_1)^2}
{(\det [\langle \mathbf x_i, \mathbf x_j\rangle])^{1/r}}
\cdot\frac{\lambda_1(M_1)^2}
{(\det [\langle \mathbf y_i, \mathbf y_j\rangle])^{1/r}}
\le \frac{r}{\gamma_r^2} \gamma_r^2,$$
since $[\langle \mathbf x_i, \mathbf x_j\rangle]_{i,j=1}^r$ and $[\langle
\mathbf y_i, \mathbf y_j\rangle]_{i,j=1}^r$   
are Gram matrices for $L_1$ and $M_1$, 
respectively. The contradiction  
finishes the proof. $\Box$

\medskip

Using the known values of $\gamma_r$ simple calculation gives that 
the minimal value of $r/\gamma_2^2$ for $2 \leq r\leq 8$ 
is $\frac 32$, which is attained at $r=2$. 
We remark that the bound of 
Theorem \ref{kitconstant} is sharp, at least for $r=2$. 
This is
demonstrated by the hexagonal lattice $A_2\leq \R^2$ which is generated by
the vectors 
$$\begin{pmatrix}\frac12   \vspace{2 mm} \\ \frac{\sqrt3}{2}
\end{pmatrix},~ 
\begin{pmatrix} 1  \vspace{2 mm} \\ 0\end{pmatrix}.$$
It is known that $A_2$ is attains the Hermite constant, moreover this holds
also for the dual lattice $A_2^*$ which is spanned by the vectors  
$$\begin{pmatrix} 0  \vspace{2 mm} \\ \frac{2}{\sqrt3}
\end{pmatrix},~
\begin{pmatrix} 1  \vspace{2 mm} \\ -\frac{1}{\sqrt3}\end{pmatrix}.$$
Some calculation shows that the minimal norm among the 
rank two tensors in $A_2\otimes A_2^*$ is 
$\sqrt2=\sqrt{\frac32}\cdot\frac{2}{\sqrt{3}}=
\sqrt{\frac 32}\lambda_1(A_2)\lambda_1(A_2^*)$.

\medskip
In \cite{Ronyai} it was shown that for the shortest nonzero matrix
$\mathbf v\in \Lambda=QM_n(\Z)Q^{-1}$  we have $\lVert \mathbf v \rVert\leq
\gamma_n$, where $\gamma_n$ is the Hermite constant. This can be
strengthened as follows. Using again that $\Lambda\cong Q\Z^n \otimes
(Q\Z^n )^*$, we obtain that 
\begin{equation} \label{berge-martinet}
\lVert \mathbf v \rVert\leq \lambda_1(Q\Z^n)\lambda_1((Q\Z^n)^*)\leq
\gamma'_n,
\end{equation}
where $\gamma'$ is the Berg\'e-Martinet constant.

\section{The modified IRS algorithm over $\Q$ for $n\leq 43$}

The input of the algorithm is an associative algebra $\A$ over $\Q$ given
by structure constants. It is known that $\A$ is isomorphic to the full
matrix algebra $M_n(\Q)$. The objective of the algorithm is to find an
element $C\in \A$ which has rank one, when viewed as a matrix from 
$M_n(\Q)$. 

The first four steps of the algorithm below are identical to the first four
steps of the corresponding algorithm from \cite{Ronyai}. The last two steps 
of that method are replaced here by a new step 5:

\begin{enumerate}

\item
Construct a maximal order $\Lambda $ in ${\cal A}$. 

\item
Compute an embedding of ${\mathcal A}$ into $M_n(\mathbb R)$. 
This way we have a Frobenius norm on  ${\mathcal A}$. For $X\in {\mathcal
A}$
we can set $\|X\|=\sqrt{Tr(X^T X)}$. Also, via this embedding $\Lambda $ can 
be viewed as a full lattice in $\mathbb R^m$, where $m=n^2$. The length
$\lVert\mathbf{v}\rVert$ of
a lattice vector $\mathbf{v}$ is just the Frobenius norm of $\mathbf{v}$ as
a matrix.

\item
Compute a rational approximation $A$ of our basis $B$ of $\Lambda$ with
a suitable precision.

\item 
Obtain a reduced basis $\mathbf{b}_1,\ldots ,\mathbf{b}_m$ of the lattice
$\Lambda\subset \mathbb R^m$
by computing an LLL-reduced basis from  $A$. 
The value  $c_m$ of reducedness is 
$\left(\gamma_m \right)^{\frac m2}\left( \frac 32
\right)^m
2^{\frac{m(m-1)}{2}}$.

\item

Generate all integral linear combinations 
$$C=\sum_{i=1}^m  \alpha_i\mathbf{b}_i,$$
where $\alpha_i$ are integers, 
$|\alpha_i|\le c_m$,
until a $C$ is found with $\rank\, C=1$. Output this $C$.

\end{enumerate}

\bigskip

\begin{theorem}
This algorithm is correct when $n\le 43$. 
Moreover, it runs in ff-polynomial time.
\end{theorem}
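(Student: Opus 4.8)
The plan is to treat the two assertions separately, and to reduce correctness entirely to Corollary~\ref{Kitaoka_cor} together with the analysis of reduced bases already carried out in \cite{Ronyai}. First, the output (if any) is automatically a rank-one matrix, since step~5 returns $C$ only after checking $\rank C=1$; so the only thing to prove for correctness is that the enumeration in step~5 cannot fail to meet such a matrix. Here I would use that all maximal orders of $\A\cong M_n(\Q)$ are conjugate to $M_n(\Z)$ by an element of $GL_n(\Q)$, so that under the embedding computed in step~2 the maximal order $\Lambda$ has the shape $QM_n(\Z)Q^{-1}$ with $Q\in GL_n(\R)$. Since $n\le 43$, Corollary~\ref{Kitaoka_cor} then applies and yields a matrix $\mathbf v\in\Lambda$ of rank one with $\lVert\mathbf v\rVert=\lambda_1(\Lambda)$. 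It thus suffices to show that $\mathbf v$ (or some other shortest vector) lies in the search range of step~5.

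For that, I would invoke the properties of the basis produced in step~4. Choosing the precision in step~3 as in \cite{Ronyai} (polynomially bounded in the input size), the LLL reduction of the rational approximation $A$, followed by applying the resulting unimodular transformation to the exact basis $B$, produces a basis $\mathbf b_1,\ldots,\mathbf b_m$ of $\Lambda$ whose ``value of reducedness'' is at most $c_m=\left(\gamma_m\right)^{m/2}\left(\tfrac32\right)^m2^{m(m-1)/2}$; this is exactly the estimate of \cite{Ronyai}, and by the standard downward induction on the Gram--Schmidt coefficients it forces every lattice vector of length at most $\lambda_1(\Lambda)$ --- in particular the rank-one vector $\mathbf v$ --- to have all coordinates with respect to $\mathbf b_1,\ldots,\mathbf b_m$ bounded in absolute value by $c_m$. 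Hence the enumeration of step~5 encounters a rank-one matrix and terminates with a correct output (possibly halting earlier on another short rank-one element, which is equally acceptable).

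For the running time: steps~1--4 coincide with the first four steps of the algorithm of \cite{Ronyai} and are ff-polynomial there --- step~1 (construction of a maximal order) uses the integer-factoring oracle, while the numerical embedding, the rational approximation to polynomial precision, and the LLL reduction are all polynomial time. Since $n$ is bounded, $m=n^2$ is bounded and hence so is $c_m$, so the number $(2c_m+1)^m$ of tuples $(\alpha_1,\ldots,\alpha_m)$ enumerated in step~5 is bounded by a constant; each rank test is performed on a matrix with polynomially bounded entries and so takes polynomial time. Combining, the whole algorithm runs in ff-polynomial time.

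The main obstacle is the second step above: one has to check carefully that the rational approximation of step~3 does not destroy the coefficient bound --- i.e.\ that $c_m$ is generous enough to absorb the precision loss --- and that the bound holds for \emph{every} shortest vector rather than merely for $\mathbf b_1$. Most of this is inherited verbatim from \cite{Ronyai}; the genuinely new ingredient, and what permits deleting the last two steps of that algorithm in favour of a direct search for a rank-one element, is Corollary~\ref{Kitaoka_cor}, which guarantees that a rank-one vector of minimal length actually exists when $n\le 43$.
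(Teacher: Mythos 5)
Your proposal follows essentially the same route as the paper's proof: reduce correctness to exhibiting a rank-one matrix inside the enumerated box, use the conjugacy $\Lambda\cong QM_n(\Z)Q^{-1}$ together with Corollary~\ref{Kitaoka_cor} to guarantee that \emph{a shortest} nonzero vector $\mathbf v$ of $\Lambda$ already has rank one, and then argue that the coordinates of $\mathbf v$ in the reduced basis are bounded by $c_m$; the timing follows because $m=n^2$ and hence $c_m$ are bounded when $n\le 43$. The one place where you deviate, and where your argument is imprecise, is the justification of the coordinate bound $|\alpha_i|\le c_m$. You appeal to ``the standard downward induction on the Gram--Schmidt coefficients,'' but that is not what produces the bound here: the Gram--Schmidt-based enumeration estimates coordinates against the Gram--Schmidt lengths $\lVert\mathbf b_i^*\rVert$, which can be much smaller than $\lVert\mathbf b_i\rVert$ and do not directly yield the stated $c_m$. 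The paper instead invokes a clean lemma of H.~W.~Lenstra (stated as Lemma~\ref{coefficients}): if $\lVert\mathbf b_1\rVert\cdots\lVert\mathbf b_m\rVert\le c\cdot\det\Gamma$, then any $\mathbf v=\sum\alpha_i\mathbf b_i$ satisfies $|\alpha_i|\le c\,\lVert\mathbf v\rVert/\lVert\mathbf b_i\rVert$; since $\mathbf v$ is a shortest vector, $\lVert\mathbf v\rVert\le\lVert\mathbf b_i\rVert$ for every $i$, and $|\alpha_i|\le c_m$ follows at once. This lemma also disposes of the concern you flag about the bound holding for arbitrary shortest vectors rather than just $\mathbf b_1$, and it absorbs the precision loss from step~3 because the product-versus-determinant inequality with constant $c_m$ is precisely what the LLL step, run on the rational approximation, is certified to return in \cite{Ronyai}. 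So your outline is sound, but you should replace the Gram--Schmidt hand-wave with Lenstra's inequality (or some equally explicit coordinate bound) to make the enumeration box rigorous.
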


\begin{proof}
For the details and timing of steps 1-4 we refer to the proof of Theorem 1
from \cite{Ronyai}. As a result of these computations we obtain a basis
$ \mathbf{b}_1,\ldots  ,\mathbf{b}_m$ of the lattice $\Lambda$ such
that 
 $$\lVert\mathbf{b}_1\rVert\cdot \lVert\mathbf{b}_2\rVert\cdots
\lVert\mathbf{b}_m\rVert\leq c_m\cdot \det(\Lambda) $$
holds with 
 $$c_m=\left(\gamma_m \right)^{\frac m2}\left( \frac 32 \right)^m
2^{\frac{m(m-1)}{2}}.$$
We recall the following bound by H. W. Lenstra \cite{Lenstra}.

\begin{lemma} \label{coefficients}
Let $\Gamma$ be a full lattice in $\mathbb R^m$.
Suppose that we have a basis $\mathbf{b}_1,\ldots, \mathbf{b}_m$ of $\Gamma$ 
over
$\mathbb Z$ such that
 $$\lVert\mathbf{b}_1\rVert\cdot \lVert\mathbf{b}_2\rVert\cdots
\lVert\mathbf{b}_m\rVert\leq c\cdot \det(\Gamma) $$
holds for a real number $c>0$. Suppose that
$$\mathbf{v} =\sum_{i=1}^m\alpha_i \mathbf{b}_i \in \Gamma,~~~\alpha_i\in
\mathbb Z. $$
Then we have
$|\alpha_i|\leq c \frac{\lVert\mathbf{v}\rVert}
{\lVert\mathbf{b}_i\rVert}$ for $i=1,\ldots, m$.
\end{lemma}

Let $\mathbf{v}\in \Lambda$ be a nonzero vector with minimal length. Then,
on one hand, by Corollary \ref{Kitaoka_cor} $\mathbf v$ is a matrix of
rank one. On the other hand, when $\mathbf v$ is expressed as an integer 
linear combination $\mathbf{v} =\sum_{i=1}^m\alpha_i \mathbf{b}_i$ then
by Lemma \ref{coefficients} we have
\begin{equation} \label{alpha-bound}
|\alpha_i|\leq c_m \frac{\lVert\mathbf{v}\rVert}
{\lVert\mathbf{b}_i\rVert}\leq c_m.
\end{equation}
This implies that there is indeed a rank one matrix $C\in \Lambda$ among the 
linear combinations enumerated. 

To obtain the timing bound, we observe that at step 5 we enumerate at most 
$(2c_m+1)^m$ linear combinations, and this value is bounded by our assumption
$n\leq 43$.
\end{proof}

\medskip
We remark that the upper bound $|\alpha_i|\leq c_m$ which defines the domain to be 
searched at step 5 can be reduced. During the run of step 5 one may
update the quantity $d$ which is the actual minimum of the 
numbers $(\gamma_r^2/\sqrt{r}) \lVert C\rVert$ over the matrices $C$ enumerated
so far (here $r$ is the rank of $C$). From 
(\ref{alpha-bound}), 
Theorem \ref{kitconstant},  and (\ref{berge-martinet}) it follows that 
\begin{equation} \label{alpha-bound2}
|\alpha_i|\leq c_m \frac{\min \{d,\gamma'_n\} }
{\lVert\mathbf{b}_i\rVert}.
\end{equation}

\section{Two by two matrices over imaginary quadratic fields} 
   
Here we consider possible extensions of the improvements obtained 
over $\Q$ to other number fields. In general not much is known about tensor
products of lattices over general number fields. On the positive side,
Coulangeon \cite{Coulangeon} extended some of Kitaoka's results to imaginary
quadratic fields. 

Let  $\K$ denote an imaginary quadratic number field
$\Q(\sqrt{-d})$ where $d$ is a square-free positive integer. By $\OK$
we denote the ring of algebraic integers in $\K$:
$\OK=\Z 1+\Z \sqrt{-d}+\Z\frac{1+\sqrt{-d}}{2}$ if $d\equiv -1$ modulo $4$
and $\OK=\Z 1+\Z \sqrt{-d}$ otherwise. In the next discussion 
we consider $\K$ (and hence $\OK$)
to be embedded into $\C$.

Let $\Aalg$ be a central simple algebra of dimension $4$ over $\K$
isomorphic to $M_2(\K)$ and let $\Lambda$ be a maximal order in
$\Aalg$. We assume that we are given an embedding 
$\phi$ of $\Aalg$ into $M_2(\C)$. From the theory of central simple algebras 
over number fields we know (see Corollary 27.6 in Reiner \cite{Re}) that  
there exists a matrix $B\in M_2(\C)$ and a fractional ideal $I$
of $\OK$ such that 
$$B\phi(\Lambda)B^{-1}=\begin{pmatrix} \OK & I^{-1} \\ I & \OK
\end{pmatrix}.$$
In other words, there exists a full $\OK$-lattice $L$ in $\C^2$
(a finitely generated $\OK$-submodule of 
$\C^2$ that spans $\C^2$ as a linear space over $\C$)
such that $$\Lambda=\left\{A\in M_2(\C): AL\subseteq L \right\}.$$
In fact, for our specific $\Lambda$, we can take
$L=B\begin{pmatrix} \OK \\ I \end{pmatrix}$.

Let $\langle,\rangle$ stand for the standard Hermitian bilinear form
on $\C^2$. The linear extension of the mapping $u\otimes v\mapsto A_{u,v}$
where $A_{u,v}w=\langle v,w \rangle u$ gives an identification
of $M_2(\C)$ with $\C^2\otimes_\C \C^2$. By this identification, the
tensor square of the standard Euclidean norm of $\C^2$ becomes
the Frobenius norm of matrices. Also,
$\Lambda$ is identified with $L\otimes_\OK L^*$ where
$$L^*=\left \{u\in \C^2:\langle u,v \rangle\in L \mbox{~for every~}v\in L
\right\}$$
and $L\otimes_\OK L^*$ is just the additive subgroup of $\C^2\otimes_\C C^2$
spanned by the tensors of the form $u\otimes v$ where $u\in L$ and
$v\in L^*$.

\medskip
\noindent
{\bf Remark.} We can speak about the rank of an element 
$\mathbf x\in L\otimes_\OK L^*$ in two ways. One is the minimal positive
integer $r$ such that $\mathbf x$ can be written as 
\begin{equation} \label{tensor-rank}
\mathbf x=\sum_{i=1}^r \mathbf x_i\otimes \mathbf y_i,
\end{equation}
for some vectors $\mathbf x_i\in L$ and $\mathbf y_i\in L^*$. The other 
possible notion of rank is the rank of $\mathbf x$ as a matrix from
$M_2(\C)$. The two notions are not the same\footnote{It is not hard to show
that the two notions of rank  coincide if $\OK$ is a principal 
ideal ring.}. As an example\footnote{We thank  G\'eza K\'os for
suggesting this example.}, let $d=5$, and $\Lambda= M_2(\OK )$ and consider 
the matrix 
$$ C=\begin{pmatrix} 3 & 1+\sqrt{-5} \\
1-\sqrt{-5} & 2
\end{pmatrix} 
$$
from $\Lambda$. We have $\det C=0$, hence $C$ has rank 1 as a matrix 
from $M_2(\C)$. Moreover, using the fact that 3 and 2 are irreducible 
elements in $\OK$, we see that $C$ is not a decomposable tensor from
$\OK^2 \otimes _\OK \OK^2$. 

We shall use the term {\em rank} in the former sense. We note also, that  in the minimal
representation (\ref{tensor-rank}) the vectors $\mathbf x_i$ and $\mathbf
y_i$ are linearly independent over $\K$. For a proof we refer to Lemma 3.1
in \cite{Coulangeon}.

\medskip

Let $M$ be an $\OK$-submodule of $\C^2$ generated by two 
linearly independent vectors. The determinant of
$M$ is defined as the following Gram matrix
$$\det M=\det\begin{pmatrix}
\langle \mathbf v_1, \mathbf v_1 \rangle &
\langle \mathbf v_1,  \mathbf v_2 \rangle \\
\langle  \mathbf v_2,  \mathbf v_1 \rangle &
\langle  \mathbf v_2,  \mathbf v_2 \rangle
\end{pmatrix},
$$
where $ \mathbf v_1,  \mathbf v_2$ is any basis for $M$.

Following the notation of \cite{Coulangeon}, 
we denote by $\gamma_h(M)$ the quantity 
$$\lVert \mathbf v\rVert ^2/(\det M)^{\frac{1}{2}},$$
where $\mathbf v$ is a shortest nonzero vector from $M$.

Let $D$ be the discriminant of $\K$ (we have $D=d$ if $d\equiv 3$ modulo 4 and
$D=4d$ otherwise.)
It is known (last paragraph of Subsection 2.1 in \cite{Coulangeon})
that $\gamma_h(M)=\gamma(M)\sqrt{D}/2,$ and hence 

\begin{equation} \label{gamma-eq}
\gamma_h(M)=\gamma(M)\sqrt{D}/2 \leq 
\gamma_4\sqrt{D}/2=\sqrt{D/2},
\end{equation} 
where $\gamma(M)$ is the ratio of
$\lVert \mathbf v\rVert ^2$ and the fourth root of the determinant of $M$,
considered as a $\Z$-lattice of rank 4, and $\gamma_4=\sqrt 2$ is 
the Hermite constant.

\medskip

Let us define $r(\Lambda)$ as the ratio 
between the squared length of the shortest rank 1 element 
of $\phi(\Lambda)$ and that of the shortest rank 2 element 
in $\phi(\Lambda)$.

\begin{proposition} \label{ratio}
We have 
$$ r(\Lambda)\leq \frac{1}{2}\gamma_h(M)\gamma_h(M'),$$
where $M$ is an $\OK$-sublattice of $L$ generated by two linearly
independent vectors over $\K$ and
and $M'$ is an $\OK$-sublattice of $L^*$ generated by two linearly 
independent vectors over $\K$.
\end{proposition}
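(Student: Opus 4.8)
The plan is to mimic, in the imaginary-quadratic setting, the argument used in the proof of Theorem \ref{kitconstant}, combined with the bound (\ref{gamma-eq}) of Coulangeon. Fix a shortest rank $2$ element $\mathbf x\in\phi(\Lambda)=L\otimes_\OK L^*$. Write it in a minimal representation $\mathbf x=\mathbf x_1\otimes\mathbf y_1+\mathbf x_2\otimes\mathbf y_2$ with $\mathbf x_1,\mathbf x_2\in L$ and $\mathbf y_1,\mathbf y_2\in L^*$; by the cited Lemma 3.1 of \cite{Coulangeon} the $\mathbf x_i$ are $\K$-linearly independent and so are the $\mathbf y_i$. Let $M$ be the $\OK$-submodule of $L$ they generate and $M'$ the $\OK$-submodule of $L^*$ generated by $\mathbf y_1,\mathbf y_2$; these are exactly the lattices appearing in the statement. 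The first step is to expand the Frobenius norm using the Hermitian inner product law: $\lVert\mathbf x\rVert^2=\sum_{i,j}\langle\mathbf x_i,\mathbf x_j\rangle\overline{\langle\mathbf y_i,\mathbf y_j\rangle}$, which is $\mathrm{Tr}$ of the product of the two $2\times 2$ Gram matrices (one conjugated). The second step is a lower bound on this trace of the form $\lVert\mathbf x\rVert^2\ge 2\sqrt{\det M}\sqrt{\det M'}$, via the Hermitian analogue of Lemma \ref{kitlemma} — the AM–GM / determinant inequality $\mathrm{Tr}(AB)\ge 2\sqrt{\det A}\sqrt{\det B}$ for positive definite Hermitian $A,B$ — with $A$, $B$ the two Gram matrices (real determinants since Gram matrices of a Hermitian form are Hermitian).

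The third step produces a matching upper bound on the shortest rank $1$ element. If $\mathbf v\in L$ and $\mathbf w\in L^*$ are shortest nonzero vectors of $M$ and $M'$ respectively (we are free to shrink to the sublattices $M,M'$ since that only makes $\lambda_1$ larger), then $\mathbf v\otimes\mathbf w\in\phi(\Lambda)$ is a rank $1$ element of squared norm $\lVert\mathbf v\rVert^2\lVert\mathbf w\rVert^2$. Hence the squared length of the shortest rank $1$ element of $\phi(\Lambda)$ is at most $\lambda_1(M)^2\lambda_1(M')^2$. Combining the two steps,
$$
r(\Lambda)\le\frac{\lambda_1(M)^2\lambda_1(M')^2}{2\sqrt{\det M}\sqrt{\det M'}}
=\frac12\cdot\frac{\lambda_1(M)^2}{\sqrt{\det M}}\cdot\frac{\lambda_1(M')^2}{\sqrt{\det M'}}
=\frac12\,\gamma_h(M)\,\gamma_h(M'),
$$
by the very definition of $\gamma_h$ given before the proposition. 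This is exactly the claimed inequality.

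The step I expect to be the main obstacle is the second one — establishing the Hermitian version of the determinant–trace inequality (Lemma \ref{kitlemma}) and making sure the bookkeeping about which Gram matrix is conjugated is harmless. Over $\R$ one diagonalizes $A^{1/2}BA^{1/2}$ and applies AM–GM to its eigenvalues; over $\C$ one should check that the Gram matrix $[\langle\mathbf x_i,\mathbf x_j\rangle]$ is a genuine positive definite Hermitian matrix (so its determinant is a positive real), that $\overline{[\langle\mathbf y_i,\mathbf y_j\rangle]}$ is again positive definite Hermitian with the same determinant, and that $\mathrm{Tr}(A\bar B)$ equals $\lVert\mathbf x\rVert^2$ and is real — after which the same diagonalization argument goes through verbatim in dimension $2$. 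A secondary subtlety is the freedom to replace $L$ by $M$ and $L^*$ by $M'$ when bounding $r(\Lambda)$ from above: this is legitimate because $M\subseteq L$ forces $\lambda_1(L)\le\lambda_1(M)$, so using $M,M'$ only weakens the upper bound in the direction we want; and because $\mathbf v\otimes\mathbf w$ with $\mathbf v\in M\subseteq L$, $\mathbf w\in M'\subseteq L^*$ still lies in $L\otimes_\OK L^*=\phi(\Lambda)$, so it is a bona fide rank $1$ element of $\phi(\Lambda)$. Everything else is the direct translation of the proof of Theorem \ref{kitconstant} with $r=2$.
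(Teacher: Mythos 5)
Your proof is correct and follows essentially the same route as the paper: express $r(\Lambda)$ as the ratio $\lambda_1(L)^2\lambda_1(L^*)^2/\lVert\omega\rVert^2$, bound the denominator below by $2\sqrt{\det M}\sqrt{\det M'}$ via the Hermitian trace--determinant inequality applied to the two $2\times2$ Gram matrices of a minimal rank-two representation, bound the numerator above by $\lambda_1(M)^2\lambda_1(M')^2$, and recognize the resulting ratio as $\tfrac12\gamma_h(M)\gamma_h(M')$. The only difference is cosmetic: where you re-derive the Hermitian analogue of Lemma \ref{kitlemma} (the step you flag as the ``main obstacle''), the paper simply invokes Proposition 3.2 of Coulangeon \cite{Coulangeon} for that lower bound, and your small notational wobble between $\mathrm{Tr}(AB)$ and $\mathrm{Tr}(A\bar B)$ is harmless since both equal $\sum_{i,j}\langle\mathbf x_i,\mathbf x_j\rangle\overline{\langle\mathbf y_i,\mathbf y_j\rangle}$ once $B$ is Hermitian.
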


\begin{proof}
Indeed, let $\mathbf v$ and $\mathbf w$ be shortest nonzero vectors from $L$
and $L^*$, respectively. Also, let $\mathbf \omega$ be a shortest nonzero 
vector of
rank 2 from $\Lambda \cong L\otimes_\OK L^*$. We have then 

$$ r(\Lambda)=\frac{\lVert \mathbf v\rVert ^2\cdot \lvert \mathbf w\rVert ^2}
{\lVert \mathbf \omega \rVert ^2}.$$
Similarly to the rational case (\ref{lower}) we obtain 
$$
\lVert \mathbf \omega \rVert^2 \ge 2 (\det M)^{1/2}(\det M')^{1/2}
$$
for some sublattices $M\leq L$ and $M'\leq L^*$ which are spanned by  two 
linearly independent vectors 
over $\K$ (see Proposition 3.2 from \cite{Coulangeon} for the
details). 

Let $\mathbf v'$ and $\mathbf w'$ be shortest nonzero vectors from $M$ and
$M'$, respectively. Clearly we have 
$\lVert \mathbf v\rVert \leq \lVert \mathbf v' \rVert$, and 
$\lVert \mathbf w\rVert \leq \lVert \mathbf w' \rVert$.
By putting all these together we obtain 
$$ r(\Lambda)=\frac{\lVert \mathbf v\rVert ^2\cdot \lvert \mathbf w\rVert ^2}
{\lVert \mathbf \omega \rVert ^2}\leq 
\frac{\lVert \mathbf v'\rVert ^2\cdot \lvert \mathbf w'\rVert ^2}
{2 (\det M)^{1/2}(\det M')^{1/2}}=\frac 12 \gamma_h(M)\gamma_h(M').
$$
\end{proof}

For $d=1$ by (\ref{gamma-eq}) and Proposition \ref{ratio} we have
$r(\Lambda)\leq \frac 12 \sqrt 2\cdot \sqrt 2=1$. Similarly, for $d=3$ 
we find that
$$r(\Lambda)\leq \frac 12 \sqrt{\frac 32} \sqrt{\frac 32}=\frac 34<1.$$
We have obtained the following:

\begin{proposition} \label{imaginary-prop}
For $d=1$, at least one of the smallest element of
$\phi(\Lambda)$ with respect to the Frobenius norm has rank one.
For $d=3$ every smallest element of $\phi(\Lambda)$ has rank one. $\Box$
\end{proposition}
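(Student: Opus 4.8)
The plan is to read Proposition~\ref{imaginary-prop} off from Proposition~\ref{ratio} together with the bound (\ref{gamma-eq}); the only genuine content is to convert a numerical inequality for $r(\Lambda)$ into a statement about which ranks can occur among the shortest vectors of $\phi(\Lambda)$. I would begin with two preliminary remarks. First, for $d=1$ and $d=3$ the ring $\OK$ is the ring of Gaussian, respectively Eisenstein, integers, which is a principal ideal domain; hence the two notions of rank discussed in the Remark above coincide (cf.\ the footnote there), and from here on the ``rank'' of an element of $\phi(\Lambda)\subseteq M_2(\C)$ is unambiguously a number in $\{1,2\}$. Second, since $\Lambda$ is a maximal order in $\Aalg\cong M_2(\K)$, some nonzero $\OK$-multiple of any element of $M_2(\K)$ lies in $\Lambda$, and scaling preserves rank; therefore $\phi(\Lambda)$ is a discrete subset of $M_2(\C)$ that contains matrices of rank $1$ and of rank $2$, so the rank-$1$ matrices of $\phi(\Lambda)$ and the rank-$2$ matrices of $\phi(\Lambda)$ each attain a minimal Frobenius norm, and $r(\Lambda)$ is a well-defined positive real.

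Next I would substitute the discriminants. For $d=1$ we have $d\not\equiv 3\pmod 4$, hence $D=4d=4$, and (\ref{gamma-eq}) gives $\gamma_h(M)\le\sqrt{D/2}=\sqrt2$ for every admissible sublattice $M$ of $L$ or $L^*$; Proposition~\ref{ratio} then yields $r(\Lambda)\le\frac12\sqrt2\cdot\sqrt2=1$. For $d=3$ we have $d\equiv 3\pmod 4$, hence $D=d=3$, so $\gamma_h(M)\le\sqrt{3/2}$ and Proposition~\ref{ratio} gives $r(\Lambda)\le\frac12\cdot\frac32=\frac34<1$. These are exactly the two computations displayed just before the proposition.

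To conclude, let $\mathbf v$ be a shortest nonzero element of $\phi(\Lambda)$. If $\rank\mathbf v=1$ there is nothing to prove, so suppose $\rank\mathbf v=2$. Then $\mathbf v$ is in particular a shortest rank-$2$ matrix in $\phi(\Lambda)$, so by the definition of $r(\Lambda)$ a shortest rank-$1$ matrix $\mathbf u$ of $\phi(\Lambda)$ satisfies $\lVert\mathbf u\rVert^2=r(\Lambda)\lVert\mathbf v\rVert^2$. For $d=1$, the bound $r(\Lambda)\le 1$ gives $\lVert\mathbf u\rVert\le\lVert\mathbf v\rVert$, so $\mathbf u$ is again a shortest nonzero element of $\phi(\Lambda)$ and it has rank one; thus at least one shortest element has rank one. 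For $d=3$, the strict bound $r(\Lambda)<1$ gives $\lVert\mathbf u\rVert<\lVert\mathbf v\rVert$, contradicting the minimality of $\mathbf v$; hence no shortest element of $\phi(\Lambda)$ has rank $2$, i.e.\ every shortest element has rank one.

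I do not expect a real obstacle here: once Proposition~\ref{ratio} and (\ref{gamma-eq}) are in hand, the argument is pure bookkeeping. The one point that deserves care is the identification of the two notions of rank, and this is precisely why the statement is limited to $d\in\{1,3\}$, where $\OK$ is a PID; without it the estimate on $r(\Lambda)$ would control only the tensor rank and not the matrix rank that the splitting algorithm ultimately needs.
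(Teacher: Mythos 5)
Your proposal is correct and follows exactly the route the paper takes: specialize the bound $\gamma_h(M)\le\sqrt{D/2}$ of (\ref{gamma-eq}) to $d=1$ and $d=3$, feed it into Proposition~\ref{ratio} to get $r(\Lambda)\le 1$ (resp.\ $r(\Lambda)\le 3/4<1$), and read off the rank of the shortest vectors. The paper leaves the last conversion implicit, and your preliminary remark about PID rank-coincidence, while true and worth noting, is not strictly needed since the proposition and $r(\Lambda)$ are both stated in terms of tensor rank, and tensor rank one always gives matrix rank one, which is all the algorithm uses.
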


\medskip

The following example shows that over the Gaussian rationals it does 
indeed occur that a shortest nonzero element of $\Lambda$ has rank 2.
Let $d=1$ let $L$ be $\OK$-submodule of $\C^2$ generated
by $(1,0)^T$ and $(\frac{1}{\sqrt 2},\frac{i}{\sqrt 2})^T$
and let
$\Lambda=\left\{A\in M_2(\C): AL\subseteq L \right\}$.
Then 
$$\Lambda=\left\{
\frac{1+i}{2}\begin{pmatrix} a & b \\ c & e \end{pmatrix}:
\begin{array}{c}
a,b,c,e\in\OK, \\
a+c\equiv a+b \equiv b+e\equiv c+e 
\equiv 0 \mbox{~mod~}
(1+i),
\\
 a+b+c+e\equiv 0 \mbox{~mod~}2
\end{array}
\right\}
$$
and the identity matrix is one of the elements of $\Lambda$
having the smallest Frobenius norm.

\medskip

Next we outline a direct, elementary proof of inequality 
(\ref{gamma-eq}).  Let $\K=\Q(\sqrt{-d})$ be an imaginary quadratic
number field, $\OK$ be the maximal order of $\K$. Suppose further that 
$\OK$ is a principal ideal ring.

\begin{lemma}
Let $z$ be any complex number. Then there exists
an element $\alpha \in \OK$ such that 
$|z-\alpha|\leq \kappa$, where
$$\kappa=
\left\{  
\begin{array}{ll}
\frac{d+1}{4\sqrt d} & \mbox{$d\equiv 3$ mod $4$,}  \\
\frac{\sqrt{d+1}}{2} & \mbox{otherwise.}
\end{array}
\right. $$ 
\end{lemma}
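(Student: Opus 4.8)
The statement is exactly the assertion that $\kappa$ bounds from above the covering radius of $\OK$, viewed as a lattice in $\C$ with its usual Euclidean metric; I in fact expect to obtain equality. The plan is: (i) write $\OK$ out as an explicit planar lattice in the two cases $d\equiv 3\pmod 4$ and $d\not\equiv 3\pmod 4$; (ii) reduce an arbitrary $z\in\C$ to a small fundamental region by translating by lattice vectors and using the reflection symmetries of $\OK$; and (iii) on that region maximise the distance from $z$ to the nearest point of $\OK$ by an elementary planar computation. The hypothesis that $\OK$ is a principal ideal ring is not needed for this lemma; it is a standing assumption of the subsection used elsewhere.

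When $d\not\equiv 3\pmod 4$ we have $\OK=\Z\oplus\Z\sqrt{-d}$, the rectangular lattice with fundamental cell $[0,1]\times[0,\sqrt{d}]$. Translating $z$ by a lattice vector and reflecting in the two coordinate axes (both of which preserve $\OK$), we may assume $z=(x,y)$ with $0\le x\le\tfrac{1}{2}$ and $0\le y\le\tfrac{\sqrt{d}}{2}$; then, by coordinatewise minimisation, $(0,0)$ is a nearest lattice point, and $|z|\le\sqrt{\tfrac{1}{4}+\tfrac{d}{4}}=\tfrac{\sqrt{d+1}}{2}=\kappa$, with equality at the centre of the cell. This finishes this case.

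When $d\equiv 3\pmod 4$ we have $\OK=\Z\oplus\Z\omega$ with $\omega=\tfrac{1+\sqrt{-d}}{2}$; as a subset of the plane this is the rectangular lattice $\Lambda_0=\Z\oplus\Z\sqrt{-d}$ together with the coset $\Lambda_0+\omega$ of cell centres of $\Lambda_0$. Translating $z$ by an element of $\Lambda_0$ and reflecting in the two axes (these send $\omega$ to $1-\omega$ and to $\omega-1$, hence preserve $\OK$), we may assume $0\le x\le\tfrac{1}{2}$ and $0\le y\le\tfrac{\sqrt{d}}{2}$. Since $\Lambda_0$ and $\Lambda_0+\omega$ are each rectangular, coordinatewise minimisation shows that for such $z$ the nearest point of $\Lambda_0$ is $O=(0,0)$ and the nearest point of $\Lambda_0+\omega$ is $P=(\tfrac{1}{2},\tfrac{\sqrt{d}}{2})$; so it remains to bound $g(z)=\min(|z|,|z-P|)$ on the rectangle $R=[0,\tfrac{1}{2}]\times[0,\tfrac{\sqrt{d}}{2}]$. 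An interior maximum of $g$ must lie on the perpendicular bisector of $OP$, which is the segment of the line $x+\sqrt{d}\,y=(d+1)/4$ lying in $R$; on that segment $|z|^2=x^2+y^2$ is a convex function of $x$, hence maximal at the endpoints $(0,\,(d+1)/(4\sqrt{d}))$ and $(\tfrac{1}{2},\,(d-1)/(4\sqrt{d}))$, at both of which $|z|=(d+1)/(4\sqrt{d})=\kappa$. Inspecting the four edges of $R$ shows $g\le\kappa$ on $\partial R$ as well: two of them contribute at most $\tfrac{1}{2}\le\kappa$, and on the other two the maximum is again $\kappa$. Hence $g\le\kappa$ throughout $R$, which is the claim.

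The routine parts are the symmetry reductions and the one-variable optimisation along the bisector; the one point needing genuine (if elementary) care is verifying, in the case $d\equiv 3\pmod 4$, that once $z$ has been pushed into $R$ no lattice point other than $O$ and $P$ can be closest to it.
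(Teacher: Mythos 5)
The paper itself omits the proof, saying only ``The proof is a simple argument from elementary geometry which we omit here,'' so there is no written argument to compare against; your job was essentially to reconstruct that omitted elementary argument, and you have done so correctly. I verified the details: in the case $d\not\equiv 3\pmod 4$ the covering radius of the rectangular lattice $\Z\oplus\sqrt{d}\,\Z$ is the half-diagonal $\tfrac12\sqrt{1+d}$; in the case $d\equiv 3\pmod 4$ your reduction to $R=[0,\tfrac12]\times[0,\tfrac{\sqrt d}{2}]$ is sound, the nearest-point claim (that only $O$ and $P$ can be nearest once $z\in R$) follows by coordinatewise minimisation within each of the two rectangular cosets $\Lambda_0$ and $\Lambda_0+\omega$, the perpendicular bisector of $OP$ is $x+\sqrt d\,y=(d+1)/4$ and meets $R$ at $(0,\kappa)$ and $(\tfrac12,(d-1)/(4\sqrt d))$, and a short computation gives $|z|=\tfrac{d+1}{4\sqrt d}=\kappa$ at both endpoints, while the edges $y=0$ and $y=\sqrt d/2$ contribute at most $\tfrac12\le\kappa$. (One could streamline the final optimisation by noting that on the convex polygon $R\cap\{|z-O|\le|z-P|\}$ the convex function $|z|$ attains its maximum at a vertex, so only four points need checking, but your version via the bisector segment is equally valid.) The remark that principality of $\OK$ is irrelevant to this particular lemma is also correct.
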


\medskip

The proof is a simple argument from elementary geometry which we omit here.
For $d=1$, we have $\kappa=\frac{\sqrt 2}{2}$, for $d=2$,
$\kappa=\frac{\sqrt 3}{2}$, for  $d=3$, $\kappa=\frac{\sqrt 3}{3}$,
for $d=7$, $\kappa=\frac{2\sqrt 7}{7}$, for $d=11$, $\kappa=\frac{3}{\sqrt
11}$. In these cases $\kappa<1$. For for $d=5,6,10$ and for $d> 11$ we have 
$\kappa>1$. 
Let us define $\tau=\lfloor \kappa+1 \rfloor$. Then obviously 
we have $\kappa/\tau <1$.


\begin{proposition} Suppose that $\kappa<1$ holds, and 
let $M$ be an $\OK$-submodule of $\C^2$
generated by two linearly independent vectors over $\K$.
Then $$\gamma_h(M)\leq \frac{\tau}{\sqrt{1-\kappa^2}}.$$
\end{proposition}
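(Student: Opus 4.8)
The plan is to produce a Gauss-type reduced $\OK$-basis of $M$ and to read the bound off it directly. To set up: $M$ is free of rank two over $\OK$ — its two given $\K$-linearly independent generators already form an $\OK$-basis, since linear independence over $\K$ forces linear independence over $\OK$ — and the Hermitian Gram determinant of any $\OK$-basis equals the same positive number $\det M$, because two $\OK$-bases differ by a matrix $U\in GL_2(\OK)$ with $|\det U|=1$, under which the Gram matrix transforms as $G\mapsto\overline{U}^{\,T}GU$.

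First I would reduce the basis. Starting from any $\OK$-basis $\mathbf v_1,\mathbf v_2$, iterate: (i) swap the two vectors if necessary so that $\lVert\mathbf v_1\rVert\le\lVert\mathbf v_2\rVert$; (ii) write $\mathbf v_2=c\,\mathbf v_1+\mathbf v_2^{\perp}$ with $c=\langle\mathbf v_2,\mathbf v_1\rangle/\langle\mathbf v_1,\mathbf v_1\rangle\in\C$ and $\mathbf v_2^{\perp}\perp\mathbf v_1$, and by the Lemma pick $\alpha\in\OK$ with $|c-\alpha|\le\kappa$, replacing $\mathbf v_2$ by $\mathbf v_2-\alpha\mathbf v_1$ — a unimodular change of $\OK$-basis that achieves $|c|\le\kappa<1$. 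Stop once step (ii) leaves $\lVert\mathbf v_1\rVert\le\lVert\mathbf v_2\rVert$, so that no further swap is called for. As in the usual termination argument for two-dimensional Gauss reduction, a swap occurs only when the just-reduced $\mathbf v_2$ is strictly shorter than $\mathbf v_1$, so the length of the shorter basis vector strictly decreases at each swap; since that length lies in the discrete set of norms of nonzero vectors of $M$ and is bounded below by $\lambda_1(M)>0$, only finitely many swaps — hence finitely many steps — occur. The output is an $\OK$-basis $\mathbf v_1,\mathbf v_2$ of $M$ with $\lVert\mathbf v_1\rVert\le\lVert\mathbf v_2\rVert$ and $|c|\le\kappa$.

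It then remains to estimate. Replacing $\mathbf v_2$ by the orthogonal vector $\mathbf v_2^{\perp}$ leaves the Gram determinant unchanged, so $\det M=\lVert\mathbf v_1\rVert^{2}\,\lVert\mathbf v_2^{\perp}\rVert^{2}$; and from $|c|\le\kappa<1$ and $\lVert\mathbf v_2\rVert\ge\lVert\mathbf v_1\rVert$,
$$\lVert\mathbf v_2^{\perp}\rVert^{2}=\lVert\mathbf v_2\rVert^{2}-|c|^{2}\lVert\mathbf v_1\rVert^{2}\ge(1-\kappa^{2})\,\lVert\mathbf v_1\rVert^{2},$$
hence $\sqrt{\det M}\ge\sqrt{1-\kappa^{2}}\,\lVert\mathbf v_1\rVert^{2}$. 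Since $\mathbf v_1\in M\setminus\{0\}$ gives $\lambda_1(M)\le\lVert\mathbf v_1\rVert$, we conclude
$$\gamma_h(M)=\frac{\lambda_1(M)^{2}}{\sqrt{\det M}}\le\frac{\lVert\mathbf v_1\rVert^{2}}{\sqrt{1-\kappa^{2}}\,\lVert\mathbf v_1\rVert^{2}}=\frac{1}{\sqrt{1-\kappa^{2}}}.$$
Finally, the hypothesis $\kappa<1$ forces $\tau=\lfloor\kappa+1\rfloor=1$, so $1/\sqrt{1-\kappa^{2}}=\tau/\sqrt{1-\kappa^{2}}$, which is the claimed inequality; the factor $\tau$ is harmless here and is presumably present only to allow a statement uniform in $d$, also covering the case $\kappa\ge1$.

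The one point I expect to need genuine care is this termination of the reduction loop, i.e. checking that a swap strictly shrinks the length of the shorter basis vector and then invoking discreteness of the lattice norms. Everything else is the short orthogonal-decomposition estimate displayed above.
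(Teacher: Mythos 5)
Your argument is correct and rests on the same two pillars as the paper's proof: use the Lemma to bring the off-diagonal Gram ratio down to $|c|\le\kappa$, then read $\gamma_h(M)\le 1/\sqrt{1-\kappa^2}$ off the factorization $\det M=\lVert\mathbf v_1\rVert^2\lVert\mathbf v_2^{\perp}\rVert^2$ together with $\lVert\mathbf v_2\rVert\ge\lVert\mathbf v_1\rVert\ge\lambda_1(M)$. Where you differ is in how the reduced basis is produced. The paper picks a shortest nonzero vector $\mathbf v$ and extends it to an $\OK$-basis $\{\mathbf v,\mathbf w\}$ --- explicitly invoking the standing hypothesis that $\OK$ is a principal ideal ring to justify the extension --- and then performs a single size-reduction of $\mathbf w$ against $\mathbf v$; there is no loop and hence no termination to check. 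You instead start from the generating pair itself (correctly observing that $\K$-linearly independent generators of an $\OK$-submodule already form an $\OK$-basis, with no PID appeal) and run a Lagrange--Gauss reduce-and-swap loop, supplying a discreteness/termination argument. Both routes are sound; yours trades the shortest-vector-extension step for the termination lemma and is slightly more elementary in that one specific respect. Your remark that $\kappa<1$ forces $\tau=\lfloor\kappa+1\rfloor=1$, so the factor $\tau$ in the statement is simply $1$ in the covered regime, is also accurate: the paper's proof is written for general $\tau$ (via $z=\tau\langle\mathbf v,\mathbf w\rangle/\langle\mathbf v,\mathbf v\rangle$ and $\lVert\mathbf w'\rVert\ge\lVert\mathbf v\rVert/\tau$ from $\tau\mathbf w'\in M$), but the resulting lower bound $(1-\kappa^2)\lVert\mathbf v\rVert^4/\tau^2$ on $\det M$ is only positive when $\kappa<1$, i.e. exactly when $\tau=1$ anyway.
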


\begin{proof}
Let $ \mathbf v,\mathbf w$ be a basis of $M$ such that $\mathbf v$ is a 
shortest nonzero vector
from $M$. Such a basis exists because $\OK$ is a principal ideal ring. 
Apply now the preceding lemma for $z= \tau \frac{\langle \mathbf v,\mathbf w
\rangle}{\langle \mathbf v,\mathbf v\rangle}$. There exists an 
$\alpha\in\OK$ be such that
$$\left| \frac{\langle \mathbf v,\mathbf w \rangle}{\langle \mathbf v, \mathbf v\rangle}
-\frac{\alpha}{\tau}\right|
\leq \frac{\kappa}{\tau}$$ 
and put $\mathbf w'=\mathbf w-\frac{\overline{\alpha}}{\tau}\mathbf v$. 
Then $\tau \mathbf w'\in M$
and hence 
$\lVert \mathbf w'\rVert \geq \frac{\lVert \mathbf v\rVert }{\tau}$. 
Furthermore,
$$ \left| \langle \mathbf v,\mathbf w'\rangle \right| = 
\left| \langle \mathbf v,\mathbf w-\frac{\overline{\alpha}}{\tau}
\mathbf v \rangle \right|=
\left| \langle \mathbf v,\mathbf w\rangle -\frac{\alpha}{\tau} \langle
\mathbf v,\mathbf v\rangle \right|
\leq \frac{\kappa}{\tau} \langle
\mathbf v, \mathbf v \rangle.$$
The Gram determinant does not change if we switch from the basis 
$\mathbf v,\mathbf w$ 
to  $\mathbf v,\mathbf w'$, hence  
\begin{eqnarray*}
\det M &=&\det\begin{pmatrix}\langle \mathbf v, \mathbf v \rangle &
\langle \mathbf v, \mathbf w' \rangle \\ 
\langle \mathbf w', \mathbf v \rangle &   
\langle \mathbf w', \mathbf w' \rangle \end{pmatrix}
=\langle \mathbf v, \mathbf v \rangle \langle \mathbf w', \mathbf w' \rangle-
|\langle \mathbf v, \mathbf w' \rangle|^2 \\
& \geq &
\left( \frac{1}{\tau^2}-\frac{\kappa^2}{\tau^2}\right) \lVert \mathbf v\rVert ^4.
\end{eqnarray*}
\end{proof}

For $d=1$ the Proposition gives $\gamma_h(M)\leq \sqrt{2}=\sqrt{D/2}$.
For $d=2$ we obtain $\gamma_h(M)\leq 2=\sqrt{D/2}$. 
For $d=3$ our bound is $\gamma_h(M)\leq \frac{\sqrt{3}}{\sqrt {2}}=
\sqrt{D/2}$. For $d=7$ the proposition gives
$\gamma_h(M)\leq \sqrt{\frac{7}{3}}<\sqrt{\frac{7}{2}}=\sqrt{\frac{D}{2}}$.
For these values of $d$ the ring $\OK$ is a principal ideal ring, 
hence we have proved (\ref{gamma-eq}).

\subsection*{The improved algorithm when $d=1$ or $d=3$}

We can achieve an improvement of the algorithm of Section 3 from
\cite{Ronyai} for $n=2$ and $d=1$ or $d=3$, i.e. for the case of two by two
matrices over the Gaussian rationals or over the Eisenstein rationals. 
These cases of the explicit isomorphism problem occur when one considers 
parametrization of Del Pezzo surfaces of degree 8, see Section 4 in 
\cite{Graaf}. Our method may present a viable alternative there 
to solving norm equations. 

Our improvement over the method of \cite{Ronyai}
is very similar to that of the algorithm over $\Q$. 
Suppose therefore that $\K$ is either $\Q(\sqrt{-1})$ or $\Q(\sqrt{-3})$,
and we have as input an algebra $\A$ over $\K$ specified by structure
constants. We assume that $\A \cong M_2(\K)$.  
The first four steps of the method in \cite{Ronyai} construct a 
maximal order $\Lambda $, an embedding $\phi:\Lambda \rightarrow M_2(\C)$,
 a $\Z$ linear embedding $\Phi$ of $\Lambda$
into $\R^8$ which maps an $y\in \Lambda$ to 
$$ \Phi(y):=(\Re \phi(y),\Im \phi(y))\in \R^8.$$
 The image $\Phi(\Lambda)$ is a   
full $\Z$ lattice in $\R^8$. Note that the (real) Euclidean norm $\lVert
\Phi(y)\rVert $ is the same as the Frobenius norm $\lVert \phi(y) \rVert$ 
inherited from $M_2(\C)$. 

Moreover, the first four steps return a $\Z$
basis $\mathbf b_1,\ldots ,\mathbf b_8$ of $\Phi(\Lambda)$ for which we have 
$$\lVert \mathbf b_1\rVert \cdot 
\lvert \mathbf b_2\rVert \cdots \lVert \mathbf
b_8\rVert \leq c_8 \det \Phi(\Lambda).$$

From this point on we can simplify the search for a zero divisor.  
By Proposition \ref{imaginary-prop} it suffices to enumerate\footnote{For
$d=3$ it suffices to find just one element with minimal norm.} the 
elements of $\Phi(\Lambda)$ which have minimal norm until a zero divisor in
$\Lambda$ is found.

To this end, we generate all integral linear combinations
$$\mathbf v =\sum_{i=1}^8  \alpha_i\mathbf{b}_i,$$
where $\alpha_i$ are integers,
$|\alpha_i|\le c_8$,
until a $\mathbf v$ is found for which the matrix $y\in \Lambda$ with
$\Phi(y)=\mathbf v$ is of rank one. The bound on the 
integers $\alpha_i$ follows from Lemma \ref{coefficients}, like in the
rational case.

\subsection*{Acknowledgment}

We are grateful to Josef Schicho for discussions on the subject.

\end{document}